\documentclass[times,doublespace,12pts]{amsart}

\usepackage{amsmath, amssymb, amsthm}
\usepackage{mathrsfs}
\usepackage[colorlinks=true,allcolors=blue]{hyperref}
\usepackage[capitalise,nameinlink]{cleveref}

\newtheorem{lemma}{Lemma}
\newtheorem{proposition}{Proposition}
\newtheorem{theorem}{Theorem}
\newtheorem{remark}{Remark}
\newtheorem{corollary}{Corollary}
\newtheorem{question}{Question}

\newcommand{\Q}{\mathbb Q}
\newcommand{\Z}{\mathbb Z}
\newcommand{\Ql}{\mathbb Q_\ell}

\newcommand{\End}{\operatorname{End}}

\newcommand{\Sp}{\operatorname{Sp}}

\title{A Note on Jacobians with Prescribed Factors}
\author{Congling Qiu}
 
\begin{document}

 \begin{abstract}
Over an infinite field, we prove a refinement of Matsusaka's theorem in
which the complementary isogeny factor can be made absolutely simple of
arbitrarily large dimension. We also discuss related questions around relative
simplicity and the isogeny Schottky problem.
\end{abstract}
\maketitle
\section*{Introduction}

A classical theorem of Matsusaka says that every abelian variety is an isogeny
factor of the Jacobian of some smooth projective curve. In this note, we prove the following
refinement.

\begin{theorem}\label{thm:main}
Let \(k\) be an infinite field, and let \(A/k\) be an abelian variety of
dimension \(g\ge 2\). For every integer \(N>0\), there exists a smooth
projective curve \(C/k\) contained in \(A\) such that \(A\) is an isogeny factor
of \(J_C\), and the complementary isogeny factor is absolutely simple of
dimension \(>N\).
\end{theorem}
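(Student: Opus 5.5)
The plan is to take \(C\) to be a complete intersection curve in a projective embedding of \(A\). Fix a very ample line bundle \(L\) on \(A\), and for \(m\gg 0\) consider curves \(C=A\cap H_1\cap\dots\cap H_{g-1}\) cut out by \(g-1\) hyperplanes in \(|L^{\otimes m}|\). By Bertini, a general such \(C\) is smooth and geometrically connected. Since \(k\) is infinite, \(k\)-rational points of the parameter space are Zariski dense, so the general member can be taken defined over \(k\).

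By the Lefschetz hyperplane theorem (Weak Lefschetz for \(H^1\)), the pushforward \(J_C\to A\) (Albanese of the inclusion) is surjective. Equivalently \(\widehat A\to J_C\) has finite kernel. Poincaré reducibility then gives \(J_C\sim A\times P\), where \(P\) is the connected kernel, a Prym-type variety. Its dimension is \(g(C)-g\), which tends to infinity with \(m\) by adjunction, so for \(m\) large \(\dim P>N\).

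The main step is to show that \(P\) is absolutely simple for a suitable \(k\)-rational choice of \(C\).

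1. Consider the universal family over the open subset \(U\) of the Grassmannian/product of linear systems parametrizing smooth complete intersection curves. Look at the monodromy of \(\pi_1^{\mathrm{et}}(U_{\bar k})\) on the vanishing part \(V=\ker(H^1(C_{\bar k},\Ql)\to H^1(\ldots))\), i.e. the orthogonal complement of \(H^1(A)\) in \(H^1(C)\).
2. By Deligne's theory of Lefschetz pencils (Weil II, plus Katz's extensions in positive characteristic for large degree), the vanishing cycles form a single conjugacy class. The geometric monodromy on \(V\) is then Zariski dense in \(\Sp(V)\), given that \(V\neq 0\), the pairing is symplectic (curves, odd degree), and \(m\) is large enough to avoid the finite-monodromy exception.
3. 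Apply a Hilbert irreducibility argument over the infinite field \(k\): there are \(k\)-points \(u\in U(k)\) whose arithmetic Galois image on \(V\), after passing to the geometric part, still has Zariski closure containing \(\Sp(V)\). Over number fields and finitely generated fields this is Terasoma's or Serre's Hilbert irreducibility for \(\ell\)-adic images. For a general infinite \(k\), I would spread out over a finitely generated subfield and specialize using the density of \(k\)-points; this is the step needing most care.
4. If \(\End(P_{\bar k})\otimes\Q\) were bigger than \(\Q\), or \(P_{\bar k}\) split, then by Tate/Faltings/Zarhin-type reasoning (or simply because endomorphisms commute with the Galois image after a finite extension) the commutant of the image in \(\End(V)\) would be larger than scalars. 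This contradicts \(\Sp(V)\) acting absolutely irreducibly. Since specialization only increases endomorphisms, I would instead directly use that \(V\) is absolutely irreducible for an open subgroup of the Galois group. Any abelian subvariety of \(P_{\bar k}\) is defined over a finite extension and yields an invariant subspace, which forces absolute simplicity.

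The expected obstacle is step 3, together with the full monodromy in positive characteristic. Over infinite non-Hilbertian fields, such as algebraically closed ones, I would replace Galois arguments by the generic point. The generic fiber's \(P\) is absolutely simple, since its endomorphisms must commute with \(\Sp(V)\). Very general members then stay absolutely simple, because the locus where endomorphisms jump is a countable union of proper closed subsets. This argument fails for countable fields, so I would handle it by choosing a finitely generated field of definition and Hilbertian specialization inside it whenever \(k\) is not algebraically closed, and by uncountable very general points or a separate argument otherwise. This is where I am least sure the argument closes cleanly for all infinite \(k\).
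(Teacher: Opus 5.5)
\paragraph{Where your argument works.} Your setup is essentially the paper's argument: complete-intersection curves, the Weak Lefschetz splitting \(J_C\sim A\times P\), Deligne's symplectic monodromy on the vanishing part, Hilbertian specialization and the commutant argument. With one correction it proves the theorem whenever \(k\) is not algebraic over a finite field.

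The correction is that your case division is misplaced. Algebraically closed fields such as \(\mathbb C\) or \(\bar{\mathbb Q}\) need no very-general-point argument. Descend all data to an infinite finitely generated subfield \(k_0\subset k\), which is Hilbertian. Pick \(t\in U(k_0)\subset U(k)\) whose Galois image has Zariski closure containing \(\Sp(V)\). Absolute simplicity over \(\bar k_0\) is the same as over \(\bar k\).

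\paragraph{The gap: infinite fields algebraic over a finite field.} This descent fails only for fields such as \(\bar{\mathbb F}_p\) or \(\bigcup_n \mathbb F_{p^{2^n}}\), and your proposal has no working argument there.
\begin{itemize}
\item Every finitely generated subfield of such \(k\) is finite, so there is no Hilbertian field to specialize in.
\item \(k\) is countable, so there are no very general points.
\item More fundamentally, every \(k\)-point of \(U\) gives an abelian variety defined over a finite field. By Tate, its geometric endomorphism algebra contains a commutative subalgebra of degree \(2\dim P\). So the criterion in your step 4 (\(\End^0(P_{\bar k})=\Q\), commutant reduced to scalars) never holds at a closed point. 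Absolute simplicity has to be detected another way.
\end{itemize}

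\paragraph{The missing idea.} The paper supplies a Frobenius-based specialization argument.
\begin{enumerate}
\item Upgrade the \(\ell\)-adic monodromy statement to a mod-\(\ell\) one. After discarding finitely many \(\ell\) (Gabber's torsion-freeness), the geometric mod-\(\ell\) monodromy on the vanishing cohomology is the full symplectic group, by Hall's refinement of Katz--Deligne.
\item Apply Chavdarov's theorem. Let \(f_t(T)\) be the characteristic polynomial of \(\mathrm{Frob}_t\) on the vanishing part, and \(f_t^{(r)}\) the polynomial whose roots are the \(r\)-th powers of its roots. The theorem gives closed points \(t\in|U|\) with \(f_t^{(r)}\) irreducible over \(\Q\) for every \(r\ge 1\).
\item Frobenius characteristic polynomials are multiplicative under isogeny decompositions. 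Hence such \(P_t\) is simple over every finite extension of \(k(t)\), and so absolutely simple.
\item Arrange \(k(t)\subset k\), so that the curve is defined over the given \(k\). The paper runs Chavdarov's density estimate, whose error term \(O(q^{-n/2})\) is uniform in \(n\), together with Lang--Weil along a sequence \(\mathbb F_{q^{n_j}}\subset k\) with \(n_j\to\infty\). Finitely many closed points contribute only boundedly many rational points, so infinitely many good closed points with residue field in \(k\) exist.
\end{enumerate}
Without these ingredients, your proposal proves the theorem only for fields not algebraic over a finite field.
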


It is also well known that a very general Jacobian is simple. The theorem suggests a
relative analogue in which one fixes an isogeny factor \(A\): in a sufficiently
large family of Jacobians admitting \(A\) as an isogeny factor, should the
complementary factor be generically simple? See
\Cref{sec:A-special-question} for a precise formulation and for positive
answers in low genus when \(\dim A=1\). In a different direction, see
\Cref{prop:elliptic-complement-simple-small-genus} for positive examples when
\(\dim A\geq 2\).

The theorem also motivates a Grothendieck-group version of the isogeny Schottky
problem, discussed in \Cref{sec:Grothendieck-Schottky}.

As an immediate application of \Cref{thm:main}, we answer the following question of
Poonen, which we learnt from W. Zhang.
\begin{corollary}\label{main:cor}
Let \(k\) be an infinite field, and let \(A\) and \(B\) be abelian varieties
over \(k\) with no common simple isogeny factor over \(k\).  Then there exists a smooth projective
curve \(C/k\) such that \(A\) is an isogeny factor of \(J_C\), while \(J_C\)
has no common simple isogeny factor with \(B\) over \(k\).
\end{corollary}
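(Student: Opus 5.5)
We deduce the corollary from \Cref{thm:main}, treating \(\dim A\ge 2\) and \(\dim A\le 1\) separately.

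\emph{Case \(\dim A = g\ge 2\).} Set \(N=\dim B\) and apply \Cref{thm:main} to \(A\). This gives a smooth projective curve \(C/k\) with an isogeny
\[
J_C\sim A\times P,
\]
where \(P\) is absolutely simple of dimension \(>N\). In particular \(P\) is simple over \(k\). Let \(S\) be a simple isogeny factor of \(J_C\) over \(k\). By Poincaré reducibility and uniqueness of the decomposition into simple factors up to isogeny, \(S\) is isogenous either to a simple factor of \(A\) or to \(P\).
\begin{itemize}
\item In the first case, \(S\) is not a factor of \(B\), by the hypothesis on \(A\) and \(B\).
\item In the second case, \(\dim S=\dim P>\dim B\), so \(S\) cannot be an isogeny factor of \(B\).
\end{itemize}
Hence \(J_C\) and \(B\) share no simple isogeny factor over \(k\).

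\emph{Case \(\dim A\le 1\).} The theorem needs dimension at least \(2\), so we replace \(A\) by an auxiliary abelian variety \(A'=A\times E\) of dimension \(\ge 2\). Here \(E\) is an elliptic curve over \(k\) that is not isogenous to any simple factor of \(B\). If \(\dim A=0\), take \(A'=E\times E'\) instead, with two such elliptic curves \(E,E'\). Such curves exist because \(k\) is infinite: \(B\) has only finitely many one-dimensional simple isogeny factors, while there are infinitely many isogeny classes of elliptic curves over \(k\). To see the latter, note that the \(j\)-invariant takes infinitely many values over \(k\), and each isogeny class contains only finitely many \(j\)-invariants \(\ldots\). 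This needs a little care in positive characteristic: over a finite subfield every value of \(j\) is still available, and since \(k\) is infinite, either \(k\) has infinitely many elements transcendental over the prime field or \(k\) is an infinite algebraic extension. In each situation one picks \(j\) so that the resulting curves are pairwise non-isogenous, for instance by choosing a transcendental \(j\), or \(j\) generating larger and larger fields.

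Now \(A'\) and \(B\) still share no simple factor, and \(A\) is an isogeny factor of \(A'\). The first case applied to \(A'\) then produces the required curve \(C\).

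\emph{Main obstacle.} The substantive input is \Cref{thm:main}; given it, the deduction is formal. The only real subtlety is producing an elliptic curve avoiding the finitely many factors of \(B\) over an arbitrary infinite field, especially an algebraic extension of \(\mathbb F_p\). There one argues that an isogeny class over \(k\) is determined over a finite subfield by its Frobenius data, and that there are infinitely many classes once the curves are defined over larger and larger finite subfields.
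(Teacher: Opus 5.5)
\textbf{Gap in the case \(\dim A\le 1\).} Your case \(\dim A\ge 2\) is exactly the paper's argument. The trouble is in the case \(\dim A\le 1\). There you introduce an auxiliary elliptic curve \(E\) avoiding the simple factors of \(B\), and your justification for the existence of \(E\) does not hold up.

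The key claim, that each isogeny class contains only finitely many \(j\)-invariants, is false in general:
\begin{itemize}
\item Over \(\mathbb C\) or \(\bar{\mathbb Q}\), the quotients \(E/G\) by cyclic subgroups of order \(n\) already give infinitely many \(j\)-invariants in the isogeny class of a non-CM \(E\).
\item Over \(k=\mathbb F_p(t)\), the relative Frobenius \(E\to E^{(p^m)}\) is a \(k\)-isogeny with \(j(E^{(p^m)})=j(E)^{p^m}\). So choosing a transcendental \(j\) does not by itself prevent isogeny to a curve you want to avoid.
\item Over \(\bar{\mathbb F}_p\), ordinary curves with CM by orders of growing conductor in a fixed imaginary quadratic field have \(j\)-invariants generating ever larger finite fields. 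Yet they all lie in one isogeny class, so ``\(j\) generating larger and larger fields'' does not help either.
\end{itemize}
The statement you actually need, that there are infinitely many isogeny classes of elliptic curves over any infinite field, is true. Proving it, however, requires a genuine isogeny invariant and a separate treatment of each type of field. Examples of such invariants are the set of places of potentially multiplicative reduction (take \(j=1/p\) for varying primes \(p\) over \(\mathbb Q\), or the analogue over \(\mathbb F_p(t)\)), or the field \(\mathbb Q(\pi)\) generated by an ordinary Frobenius over infinite algebraic extensions of \(\mathbb F_p\). As written, your proposal does not supply this.

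\textbf{The repair.} None of this is needed. For \(\dim A=1\), the paper simply applies \Cref{thm:main} to \(A^2\) instead of \(A\times E\).
\begin{itemize}
\item \(A^2\) has dimension \(2\).
\item Its only simple factor up to isogeny is \(A\), which by hypothesis is not a factor of \(B\).
\item \(A\) is an isogeny factor of \(A^2\), hence of \(J_C\).
\end{itemize}
Your first-case argument then applies verbatim. If you want to allow \(\dim A=0\), the curve \(C=\mathbb P^1_k\), with \(J_C=0\), already works. So the ``main obstacle'' you identify is self-inflicted. With \(A\times A\) in place of \(A\times E\), the deduction from \Cref{thm:main} is entirely formal and needs no input about isogeny classes of elliptic curves.
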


Indeed, if \(\dim A\ge 2\), apply the theorem to \(A\) with
\(N=\dim B\). If \(\dim A=1\), apply the theorem instead to \(A^2\).  

The statement of \Cref{main:cor} might be   known to experts, although
we have not found a definite reference.  Over \(\mathbb C\), a closely related
isogeny statement is mentioned in the discussion following
\cite[Corollary 1.4]{FS}.  By contrast, \cite[Corollary 1.4]{FS} shows that
the analogous statement with actual direct factors in the category of abelian
varieties is false for a general abelian variety over \(\mathbb C\).

The proof of \Cref{thm:main} is by a monodromy argument for hyperplane sections, which is straightforward at
least when \(k\) is not algebraic over   a finite field. The required monodromy was established by Deligne \cite{WeilII}.  (We also learnt that Petrov had proposed a similar solution to  \Cref{main:cor} recently.)
When \(k\) is  a finite field, we expect the theorem and thus the corollary remain true as stated, though this does not seem to follow from the present approach.    We also discuss the case \(\dim A=1\) in the end of \Cref{sec:Absolute Simplicity}, after the proof of the main theorem.
 \subsection*{Acknowledgment}
The author thanks W. Zhang for helpful communications.

The author used ChatGPT and Gemini as writing assistants in preparing parts of
the manuscript, including suggestions for exposition and draft arguments. The
author checked and revised all such output and takes full responsibility for
the mathematical content.
 
\section{Monodromy of Successive Complete Intersections}

This section establishes the maximal symplectic monodromy for successive smooth complete intersections.
This serves as a minor but natural generalization of the classical statement for Lefschetz pencils in Deligne \cite{WeilII}.

Let \(k\) be a field and \(\ell\) a prime number different from the characteristic of \(k\).
Let \(X\) be a smooth projective variety of dimension \(n+r\) over \(k\). Let \(S_1,\dots,S_{r-1}\) be a fixed flag of smooth complete intersections such that
\[
X\supset S_1\supset S_2\supset \dots \supset S_{r-1},
\]
where \(\dim(S_i)=n+r-i\). Let \(S=S_{r-1}\), so that \(\dim S=n+1\).

By the Weak Lefschetz Theorem, after base change to \(\bar k\), the restriction maps
\[
H^n(X_{\bar k},\Ql)\xrightarrow{\sim}
H^n((S_1)_{\bar k},\Ql)\xrightarrow{\sim}\dots\xrightarrow{\sim}
H^n(S_{\bar k},\Ql)
\]
are isomorphisms. Also, for a smooth hypersurface section \(Y\subset S\), the
restriction map
\[
H^n(S_{\bar k},\Ql)\hookrightarrow H^n(Y_{\bar k},\Ql)
\]
is injective. Thus we obtain a canonical inclusion
\[
H^n(X_{\bar k},\Ql)\hookrightarrow H^n(Y_{\bar k},\Ql).
\]

\begin{lemma}\label{lem:l-adic-vanishing-monodromy}
Let \(U\subset \mathbb P^1_k\) be the smooth locus of a Lefschetz pencil of
hypersurface sections \(Y_t\subset S\). Assume \(n\) is odd. Then the geometric
algebraic monodromy group, defined as the Zariski closure of the image of
\(\pi_1(U_{\bar k})\) acting on
\[
H^n_{\mathrm{van}}(Y_{t,\bar k},\Ql)
:=
H^n(Y_{t,\bar k},\Ql)/
\operatorname{Im}\bigl(H^n(X_{\bar k},\Ql)\to H^n(Y_{t,\bar k},\Ql)\bigr),
\]
is
\(
\Sp\bigl(H^n_{\mathrm{van}}(Y_t,\Ql)\bigr)
\)
with respect to the intersection pairing.
\end{lemma}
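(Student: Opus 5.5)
We reduce the statement to Deligne's theorem on the monodromy of Lefschetz pencils [WeilII, Th\'eor\`eme 4.4.1 and 4.4.9], applied to the smooth projective variety \(S\) of dimension \(n+1\) in place of \(X\). Deligne's setting is a Lefschetz pencil of hyperplane sections \(Y_t\) of a smooth projective variety \(S\) of dimension \(n+1\), with \(n\) odd. It gives three facts. First, the vanishing cycles span a subspace \(E\subset H^n(Y_{t,\bar k},\Ql)\). Second, \(E^\perp\) with respect to the cup product pairing equals the invariant part, which is the image of \(H^n(S_{\bar k},\Ql)\). Third, by the Kazhdan--Margulis theorem, the Zariski closure of the image of \(\pi_1(U_{\bar k})\) acting on \(E/(E\cap E^\perp)\) is the full symplectic group, since \(n\) is odd and the pairing is alternating. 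The dichotomy finite versus symplectic is resolved in the symplectic direction whenever \(E\neq 0\); if \(E=0\) both sides are trivial. For hypersurface sections of higher degree we first re-embed \(S\) by the Veronese map, so that hypersurfaces become hyperplane sections.

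Next we identify the paper's \(H^n_{\mathrm{van}}\) with Deligne's \(E/(E\cap E^\perp)\). The argument runs as follows.

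\begin{enumerate}
\item By the weak Lefschetz chain recalled above, the image of \(H^n(X_{\bar k},\Ql)\) in \(H^n(Y_{t,\bar k},\Ql)\) equals the image of \(H^n(S_{\bar k},\Ql)\). We denote it \(I\).
\item By hard Lefschetz and Poincar\'e duality, the pairing restricted to \(I\) is nondegenerate, so \(I\cap I^\perp=0\).
\item Deligne's invariant cycle statement gives \(E^\perp=I\). Hence \(E=I^\perp\), using nondegeneracy on the whole \(H^n(Y_t)\).
\item It follows that \(H^n(Y_t)=I\oplus E\) orthogonally, \(E\cap E^\perp=0\), and \(H^n(Y_t)/I\cong E\) compatibly with the pairing and with the monodromy action.
\end{enumerate}

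Since monodromy acts trivially on \(I\), the group acting on the quotient \(H^n_{\mathrm{van}}\) is the same as the group acting on \(E\). The pairing on \(E\) is nondegenerate and alternating, so it is preserved and \(\Sp(E)\) makes sense.

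The main obstacle is checking that Deligne's hypotheses hold in our setting. These hypotheses are that the pencil is a genuine Lefschetz pencil over \(k\) (or over \(\bar k\) after base change), and that the argument works for arbitrary \(k\) and \(\ell\neq\operatorname{char}k\). In positive characteristic we must also make sure that the vanishing cycles are nonzero, as opposed to the exceptional case in characteristic \(2\). We would handle this by citing Deligne's treatment for odd \(n\), where the Picard--Lefschetz formula gives symplectic transvections \(x\mapsto x\pm\langle x,\delta\rangle\delta\) with \(\delta\neq0\). The vanishing cycles are conjugate under \(\pi_1(U_{\bar k})\), and the resulting group is generated by transvections acting irreducibly on \(E\). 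The classification of such Zariski-closed groups then yields \(\Sp(E)\). The geometric monodromy is computed over \(\bar k\), so the base field plays no role beyond base change.
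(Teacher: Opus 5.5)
Your proposal is correct and follows the same route as the paper. The paper's proof simply invokes Deligne's big monodromy theorem for Lefschetz pencils with \(n\) odd, applied over \(\bar k\). Your Veronese reduction and your identification of \(H^n_{\mathrm{van}}\) with the span \(E\) of the vanishing cycles (via \(E^\perp=I\) and hard Lefschetz, giving \(H^n(Y_{t,\bar k},\Ql)=I\oplus E\) orthogonally) spell out steps the paper leaves implicit.

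One small correction to your commentary. For \(n\) odd there is no finite-versus-big dichotomy and no Kazhdan--Margulis input; those belong to the case \(n\) even, as does the wild characteristic-\(2\) phenomenon. For \(n\) odd, the logarithms of the unipotent Picard--Lefschetz transvections already generate the Lie algebra of \(\Sp(E)\), because their vanishing cycles form a single monodromy orbit spanning \(E\).
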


\begin{proof}
Since \(n\) is odd, this follows from Deligne's big \(\ell\)-adic monodromy
\cite[Thm. 4.4.1 and Lem. 4.4.2a]{WeilII}, applied over \(\bar k\).
\end{proof}

The same statement holds with \(\mathbb F_\ell\)-coefficients,  by the Weak Lefschetz Theorem with finite coefficients
\cite[Exp. XIV, Cor. 3.3]{SGA4half}.

\begin{lemma}\label{lem:mod-l-vanishing-monodromy}
Keep the notation of the preceding lemma. Then, for all sufficiently large primes
\(\ell\neq \operatorname{char}(k)\), the geometric monodromy action on
\[
H^n_{\mathrm{van}}(Y_t,\mathbb F_\ell)
:=
H^n(Y_{t,\bar k},\mathbb F_\ell)/
\operatorname{Im}\bigl(H^n(X_{\bar k},\mathbb F_\ell)
\to H^n(Y_{t,\bar k},\mathbb F_\ell)\bigr)
\]
has image
\(
\Sp\bigl(H^n_{\mathrm{van}}(Y_t,\mathbb F_\ell)\bigr)
\)
with respect to the intersection pairing.
\end{lemma}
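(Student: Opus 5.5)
We pass from \(\ell\)-adic to mod-\(\ell\) coefficients through the Picard--Lefschetz description of the monodromy, rather than through Zariski density, since Zariski density of the \(\ell\)-adic image says nothing directly about reduction mod \(\ell\).

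\textbf{Step 1: a uniform integral lattice.} The vanishing part \(H^n_{\mathrm{van}}(Y_t,\Z_\ell)\) (defined as in \Cref{lem:l-adic-vanishing-monodromy}, modulo torsion) is generated by the vanishing cycles \(\delta_1,\dots,\delta_m\) of the Lefschetz pencil. The geometric monodromy group \(\pi_1(U_{\bar k})\) is generated by the Picard--Lefschetz transvections
\[
T_i(x)=x\pm (x,\delta_i)\,\delta_i .
\]
For \(n\) odd these are symplectic transvections. Everything here comes from the same geometric data for every \(\ell\): the cohomology groups \(H^n(Y_{t,\bar k},\Z_\ell)\) and \(H^n(X_{\bar k},\Z_\ell)\) are torsion free for all but finitely many \(\ell\). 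Moreover, the intersection form on the vanishing quotient has discriminant a fixed nonzero integer \(d\), which is independent of \(\ell\) by the usual comparison and spreading-out argument. So for \(\ell\nmid d\) and \(\ell\) large, reducing mod \(\ell\) gives the following. By the Weak Lefschetz theorem with finite coefficients \cite[Exp. XIV, Cor. 3.3]{SGA4half}, \(H^n_{\mathrm{van}}(Y_t,\mathbb F_\ell)\) is the reduction of the lattice. The pairing on it is nondegenerate and alternating, and the mod-\(\ell\) monodromy is the group generated by the transvections \(T_{\bar\delta_i}\).

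\textbf{Step 2: group theory of transvection groups.} The vanishing cycles form a single orbit under monodromy; this is Deligne's irreducibility and conjugacy of vanishing cycles \cite[4.4]{WeilII}, and it holds over \(\bar k\), hence also mod \(\ell\) since it is a statement about the action of \(\pi_1\) on the set \(\{\pm\delta_i\}\). The \(\bar\delta_i\) span \(H^n_{\mathrm{van}}(Y_t,\mathbb F_\ell)\). I would then apply the classical theorem on groups generated by symplectic transvections (McLaughlin, or the results used by Katz and Hall). Let \(V\) be a nondegenerate symplectic space over \(\mathbb F_\ell\), and let \(G\) be generated by transvections whose centers form one \(G\)-orbit spanning \(V\), with \(G\) acting irreducibly. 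If \(\ell\) is odd, this forces \(G=\Sp(V)\). The irreducibility of \(V\) mod \(\ell\) follows from the spanning and orbit property, together with nondegeneracy: any invariant subspace either contains some \(\bar\delta_i\), hence all of them, or is orthogonal to all of them, hence zero.

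\textbf{Main obstacle.} The delicate point is uniformity in \(\ell\). We need the vanishing lattice and its pairing to be defined independently of \(\ell\), so that nondegeneracy mod \(\ell\) holds for large \(\ell\). In positive characteristic there is no Betti lattice. I would handle this by replacing it with the abstract \(\Z\)-lattice generated by the \(\delta_i\) with Gram matrix \((\delta_i,\delta_j)\). These intersection numbers are integers independent of \(\ell\): they are determined by the geometry of the pencil via the Picard--Lefschetz formula and the local computation at the singular fibres. The exceptional primes are then those dividing the Gram determinant of this lattice, together with those where torsion occurs.

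An alternative route would be a Larsen--Pink/Nori type result: a Zariski-dense image in \(\Sp\) has full mod-\(\ell\) image for large \(\ell\). That result requires the image to come from a \(\Z\)-form, however, so the transvection argument above is the one to carry out.
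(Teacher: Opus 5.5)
Your reduction to group theory is sound, and it is essentially the mechanism behind the Katz--Deligne--Hall theorem that the paper simply cites together with Gabber's torsion-freeness \cite{Gab}: the mod-$\ell$ monodromy is generated by Picard--Lefschetz transvections, irreducibility follows from spanning, conjugacy and nondegeneracy, and McLaughlin's classification applies for $\ell$ odd. The gap is in the one step that carries the uniformity in $\ell$. You need that, for all large $\ell$, the pairing on the $\Z_\ell$-span of the vanishing cycles is perfect. The same fact is what identifies the span of the $\bar\delta_i$ with the paper's quotient $H^n(Y_{t,\bar k},\mathbb F_\ell)/\operatorname{Im}H^n(X_{\bar k},\mathbb F_\ell)$ and makes the form nondegenerate there.

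Your ``comparison and spreading-out'' argument for this works only in characteristic $0$, via Artin comparison with a Betti lattice. Your substitute in characteristic $p$ does not work. The local Picard--Lefschetz computation at a singular fibre gives only the local monodromy and $(\delta_i,\delta_i)=0$. The off-diagonal entries $(\delta_i,\delta_j)$ are global invariants that depend on the chosen paths in $\pi_1(U_{\bar k})$, and a priori they are just elements of $\Z_\ell$, computed separately for each $\ell$. Nothing you invoke forces them to be one integer independent of $\ell$. For instance, $\operatorname{tr}(T_iT_j)=\dim H^n_{\mathrm{van}}\mp(\delta_i,\delta_j)^2$, and $\ell$-independence of Frobenius traces does not control traces of such geometric elements. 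So the Gram determinant is only known to be nonzero in each $\Z_\ell$, not prime to $\ell$ for all large $\ell$. Positive characteristic is exactly where the paper needs this lemma: it is used only in \Cref{prop:finite-field-specialization-abs-simple}, over $\mathbb F_q$.

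To close the gap you need a genuinely integral $\ell$-adic input for $\ell\gg0$, established for each $\ell$ separately rather than through a common $\Z$-form. One route runs as follows.
\begin{enumerate}
\item By Gabber, $H^*(Y_{\bar k},\Z_\ell)$ and $H^*(S_{\bar k},\Z_\ell)$ are torsion-free for $\ell\gg0$, so Poincar\'e duality makes $H^n(Y_{\bar k},\Z_\ell)$ unimodular.
\item Injectivity of $H^n(S_{\bar k},\mathbb F_\ell)\to H^n(Y_{\bar k},\mathbb F_\ell)$ then shows that the image of $H^n(S_{\bar k},\Z_\ell)$ is saturated.
\item The orthogonal complement of that image (the vanishing lattice) is unimodular if and only if the form $(a,b)\mapsto\int_S a\cup b\cup[Y]$ on $H^n(S_{\bar k},\Z_\ell)$ is. This is an integral hard Lefschetz statement for $S$ for large $\ell$, which has to be supplied.
\item You must also show that the $\delta_i$ span this complement over $\Z_\ell$, not merely over $\Ql$.
\end{enumerate}
Alternatively, in the case the paper actually uses ($X=A$ an abelian variety), you could lift $A$ with a polarization, the flag and the Lefschetz pencil to characteristic $0$. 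The mod-$\ell$ monodromy images agree via specialization of the tame fundamental group, and the Betti lattice of the lift then makes your Step 1 legitimate. As written, you identify the main obstacle correctly but do not overcome it. The paper avoids it by citing Gabber together with \cite[Theorem 3.5]{RSV}.
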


\begin{proof}
By Gabber's torsion-freeness theorem \cite{Gab}, after excluding finitely many
primes \(\ell\), the relevant \(\mathbb Z_\ell\)-cohomology groups are
torsion-free. Then one can apply a version of the big mod-\(\ell\) monodromy
theorem for Lefschetz pencils, coming from the standard Katz--Deligne theory and
its refinements by Hall; see for example \cite[Theorem 3.5]{RSV}.
\end{proof}

\section{Absolute Simplicity of the Vanishing Jacobian}\label{sec:Absolute Simplicity}
We now specialize \Cref{lem:l-adic-vanishing-monodromy} and \Cref{lem:mod-l-vanishing-monodromy} to the case of curves
embedded in an abelian variety.
Let \(A\) be an abelian variety of dimension \(g\ge 2\) over \(k\). Set \(X=A\). Choose a projective embedding \(A\hookrightarrow \mathbb P^N_k\). By Bertini
over infinite fields, and by Poonen's Bertini theorem \cite{Poo04} over finite fields, after
allowing hypersurfaces of sufficiently large degree, we may choose such a smooth
complete-intersection flag
\[
    A\supset S_1\supset \cdots \supset S_{g-2}=S
\]
defined over \(k\).

For the generic  curve \(C_\eta\subset S\) of a Lefschetz pencil,   \(J_{C_\eta}\) splits up to isogeny as
\[
J_{C_\eta}\sim A_{k(\eta)}\times A_{\mathrm{van}},
\]
where, after fixing a geometric generic point
\(\bar\eta\to\eta\), 
\[
H^1(A_{\mathrm{van},\bar \eta},\Ql)\cong H^1_{\mathrm{van}}(C_{\bar \eta},\Ql).
\]

\begin{proposition}\label{prop:vanishing-jacobian-abs-simple}
The abelian variety \(A_{\mathrm{van}}\) is absolutely simple.
\end{proposition}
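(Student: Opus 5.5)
We need to show that $A_{\mathrm{van}}$ stays simple over $\overline{k(\eta)}$. The plan is to show that its geometric endomorphism algebra is a division algebra, in fact $\Q$, by combining the big monodromy of \Cref{lem:l-adic-vanishing-monodromy} with the $\ell$-adic comparison between endomorphisms and Galois-invariant tensors.

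Set $V=H^1(A_{\mathrm{van},\bar\eta},\Ql)\cong H^1_{\mathrm{van}}(C_{\bar\eta},\Ql)$, which has dimension $2h$ with $h=\dim A_{\mathrm{van}}$; here $n=1$ is odd. The strategy has four steps.

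\textbf{Step 1: a finite extension still sees big monodromy.} Let $L/k(\eta)$ be a finite extension inside $\overline{k(\eta)}$ over which all geometric endomorphisms of $A_{\mathrm{van}}$ are defined. Then $\operatorname{Gal}(\overline{k(\eta)}/L\bar k)$ corresponds to an open subgroup of finite index in the geometric monodromy group $\pi_1(U_{\bar k})$, after shrinking $U$ so that $A_{\mathrm{van}}$ extends to an abelian scheme over $U$. The Zariski closure of a finite-index subgroup contains the identity component of the original closure. By \Cref{lem:l-adic-vanishing-monodromy} that closure is $\Sp(V)$, which is connected, so the image of $\operatorname{Gal}(\overline{k(\eta)}/L\bar k)$ is Zariski dense in $\Sp(V)$.

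\textbf{Step 2: endomorphisms commute with symplectic monodromy.} Every endomorphism $\phi\in\End(A_{\mathrm{van},\bar\eta})\otimes\Q$ is defined over $L$. Its action on $V$ therefore commutes with $\operatorname{Gal}(\overline{k(\eta)}/L)$, and in particular with a Zariski dense subgroup of $\Sp(V)$. Hence $\phi\otimes\Ql$ lies in the commutant of $\Sp(V)$ inside $\End(V)$.

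\textbf{Step 3: the commutant is trivial.} The standard representation of $\Sp(V)$ is absolutely irreducible, so this commutant is $\Ql$. The map $\End^0(A_{\mathrm{van},\bar\eta})\otimes\Ql\to\End(V)$ is injective, so $\End^0(A_{\mathrm{van},\bar\eta})=\Q$. An abelian variety whose endomorphism algebra is $\Q$ cannot decompose up to isogeny, since a decomposition would produce nontrivial idempotents. Therefore $A_{\mathrm{van}}$ is absolutely simple. One should also note that $h>0$, which holds because the vanishing cohomology of a Lefschetz pencil is nonzero (nonzero vanishing cycles).

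\textbf{Step 4: the main obstacle.} The delicate point is the passage between the function field and the geometric fundamental group. Three things must be checked:
\begin{itemize}
\item the Galois representation of $\operatorname{Gal}(\overline{k(\eta)}/k(\eta))$ on $V$ factors through $\pi_1(U)$, with $\pi_1(U_{\bar k})$ being the geometric part;
\item the constant field extension $\bar k$ causes no trouble, since absolute simplicity over $\overline{k(\eta)}$ only requires the geometric group;
\item the identification $V\cong H^1_{\mathrm{van}}$ is equivariant.
\end{itemize}
None of this needs Tate's conjecture over function fields; injectivity of the $\ell$-adic realization suffices. If the $\ell$-adic route were somehow insufficient, the fallback is \Cref{lem:mod-l-vanishing-monodromy}. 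Reducing endomorphisms modulo a large prime $\ell$, a nontrivial idempotent would give a monodromy-stable proper subspace of $V\otimes\mathbb F_\ell$, contradicting the irreducibility of $\Sp$ over $\mathbb F_\ell$.
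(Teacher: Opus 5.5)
Your proposal is correct and is essentially the paper's argument: geometric endomorphisms of \(A_{\mathrm{van},\bar\eta}\) are defined over a finite extension. They therefore commute with a finite-index subgroup of \(\pi_1(U_{\bar k})\), whose Zariski closure still contains the connected group \(\Sp(V)\), and Schur's lemma together with faithfulness of \(\End^0\otimes\Ql\) on \(V\) forces \(\End^0(A_{\mathrm{van},\bar\eta})=\Q\). Your extra points (\(h>0\), the idempotent argument, the Step 4 checks) only make explicit what the paper leaves implicit; the mod-\(\ell\) fallback is unnecessary, and as written it would also need the fact that a bounded-index subgroup of \(\Sp_{2h}(\mathbb F_\ell)\) is still all of \(\Sp_{2h}(\mathbb F_\ell)\) for large \(\ell\).
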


\begin{proof}
Set \(D=\End^0(A_{\mathrm{van},\bar \eta})\). The algebra
\(D\otimes_{\Q}\Ql\) acts faithfully on
\[H^1(A_{\mathrm{van},\bar \eta},\Ql)\simeq H^1_{\mathrm{van}}(C_{\bar\eta},\Ql).\] After replacing \(U\) by a finite \'etale cover, every element of \(D\) spreads out
and hence commutes with a finite-index subgroup \(\Gamma\subset \pi_1(U_{\bar k})\).
The Zariski closure of the image of \(\Gamma\) has the same identity component as
the Zariski closure of the image of \(\pi_1(U_{\bar k})\). Therefore this identity
component is \(\Sp(H^1_{\mathrm{van}}(C_{\bar\eta},\Ql))\) by \Cref{lem:l-adic-vanishing-monodromy}.
Hence \(D\otimes_{\Q}\Ql\) lies in the centralizer of this symplectic group. By
Schur's lemma, this centralizer is \(\Ql\). Thus \(D=\Q\), and \(A_{\mathrm{van}}\)
is absolutely simple.
\end{proof}

\begin{proposition}[Hilbertian specialization of absolute simplicity]\label{cor:hilbertian-specialization-abs-simple}
Assume that \(k\) is Hilbertian. Then there exist infinitely many \(t\in U(k)\)
such that \(A_{\mathrm{van},t}\) is absolutely simple.
\end{proposition}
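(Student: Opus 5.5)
We plan to combine the big mod-\(\ell\) monodromy of \Cref{lem:mod-l-vanishing-monodromy} with Hilbert irreducibility, so that specializations \(t\in U(k)\) keep the full symplectic image mod \(\ell\) on the arithmetic side. Then we deduce absolute simplicity of \(A_{\mathrm{van},t}\) from the fact that \(\Sp_{2h}(\mathbb F_\ell)\) acts absolutely irreducibly on \(A_{\mathrm{van},t}[\ell]\otimes\bar{\mathbb F}_\ell\), even after restricting to any finite-index subgroup of the Galois group. Here \(2h=\dim H^1_{\mathrm{van}}\).

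\textbf{Step 1 (specialization).} Fix a large prime \(\ell\neq\operatorname{char}k\) as in \Cref{lem:mod-l-vanishing-monodromy}. The arithmetic monodromy representation \(\rho_\ell:\pi_1(U)\to \mathrm{GSp}(H^1_{\mathrm{van}}(C_{\bar\eta},\mathbb F_\ell))\) has finite image \(G\). By the lemma, \(G\) contains \(\Sp_{2h}(\mathbb F_\ell)\) as the image of \(\pi_1(U_{\bar k})\). The kernel of \(\rho_\ell\) defines a finite étale Galois cover \(V\to U\) with group \(G\); it is connected, and it may be geometrically disconnected only through the multiplier. Since \(k\) is Hilbertian, there are infinitely many \(t\in U(k)\) whose fiber in \(V\) is a single point with residue field of degree \(|G|\). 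For such \(t\), the decomposition group is all of \(G\), so \(\rho_\ell(\mathrm{Gal}(\bar k/k))\supset \Sp_{2h}(\mathbb F_\ell)\) on \(A_{\mathrm{van},t}[\ell]\). We must take care that \(U\) is geometrically irreducible, so that Hilbert irreducibility applies to covers of an open subset of \(\mathbb P^1_k\); this holds.

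\textbf{Step 2 (from big mod-\(\ell\) image to absolute simplicity).} Suppose \(A_{\mathrm{van},t}\) is not absolutely simple, or more generally that \(\End^0(A_{\mathrm{van},t,\bar k})\neq\Q\). All endomorphisms are defined over a finite extension \(k'/k\). Then \(\End(A_{\mathrm{van},t,\bar k})\otimes\mathbb F_\ell\) is nonzero beyond the scalars \(\mathbb F_\ell\) for \(\ell\) not dividing an appropriate index. It commutes with \(\rho_\ell(\mathrm{Gal}(\bar k/k'))\), and this image is a normal subgroup of bounded index \([k':k]\) inside a group containing \(\Sp_{2h}(\mathbb F_\ell)\). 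For \(\ell\) large, \(\Sp_{2h}(\mathbb F_\ell)\) is perfect and quasisimple, and it has no proper subgroup of index less than \(\ell\). So once \(\ell>[k':k]\), the image of \(\mathrm{Gal}(\bar k/k')\) still contains \(\Sp_{2h}(\mathbb F_\ell)\), which acts absolutely irreducibly on \(\mathbb F_\ell^{2h}\). Its commutant is therefore \(\mathbb F_\ell\), which contradicts the injectivity of \(\End\otimes\mathbb F_\ell\to\End(A[\ell])\) when the endomorphism rank is greater than \(1\).

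\textbf{Main obstacle.} The obstacle is uniformity in Step 2: \(\ell\) is fixed before \(t\) is chosen, but \([k':k]\) depends on \(t\). To handle this, we bound \([k':k]\) uniformly: all endomorphisms of an abelian variety of dimension \(h\) are defined over \(k(A[m])\) for any fixed \(m\ge3\) (Silverberg). This gives \([k':k]\le|\mathrm{GL}_{2h}(\Z/3)|\) independently of \(t\), and then we choose \(\ell\) larger than this bound from the start. A secondary issue is ensuring that \(\Z\)-rank greater than \(1\) survives reduction mod \(\ell\). This follows because \(\End(A)\otimes\Z_\ell\hookrightarrow\End(T_\ell A)\) has torsion-free cokernel, so \(\End\otimes\mathbb F_\ell\) injects into \(\End(A[\ell])\).
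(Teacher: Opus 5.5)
Your proof is correct, but it runs at finite level, whereas the paper's proof is $\ell$-adic.

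\textbf{The paper's route.} It combines the $\ell$-adic monodromy of \Cref{lem:l-adic-vanishing-monodromy} with Serre's Hilbert irreducibility theorem for $\ell$-adic images. This gives infinitely many $t$ for which the Zariski closure of the Galois image on $H^1((A_{\mathrm{van},t})_{\bar k},\Ql)$ contains $\Sp$. If $L/k$ is a finite extension over which all endomorphisms are defined, the image of $\mathrm{Gal}(\bar k/L)$ has finite index in that image. Its Zariski closure therefore has the same identity component, so it still contains the connected group $\Sp$, and Schur's lemma gives $\End^0=\Q$. This identity-component argument is what removes the uniformity problem you identified: no bound on $[L:k]$ is needed. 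It also uses only Deligne's $\ell$-adic theorem, not the mod-$\ell$ refinement resting on Gabber and Hall.

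\textbf{Your route.} You need only classical Hilbert irreducibility for a single finite Galois cover. In exchange, you must invoke the deeper \Cref{lem:mod-l-vanishing-monodromy} together with a bound on the field of definition of endomorphisms that is uniform in $t$.
\begin{itemize}
\item Silverberg's theorem does supply this bound. It requires $m$ prime to $\operatorname{char}k$, so take $m=4$ in characteristic $3$.
\item Alternatively, $\mathrm{Gal}(k'/k)$ acts faithfully on the lattice $\End(A_{\bar k})$ of rank at most $4h^2$. Minkowski's bound on finite subgroups of $\mathrm{GL}_r(\Z)$ then bounds $[k':k]$.
\end{itemize}

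The remaining steps are sound: the inequality $[\Sp_{2h}(\mathbb F_\ell):H\cap \Sp_{2h}(\mathbb F_\ell)]\le [k':k]<\ell$ forces $H\supseteq \Sp_{2h}(\mathbb F_\ell)$, and $\End(A_{\bar k})\otimes\mathbb F_\ell\to\End(A[\ell])$ is injective. One detail to add when fixing $\ell$: take $\ell$ prime to the degree of the isogeny $J_C\sim A\times A_{\mathrm{van}}$. Then $H^1_{\mathrm{van}}(C_t,\mathbb F_\ell)$ is genuinely identified with the dual of $A_{\mathrm{van},t}[\ell]$ for every $t\in U(k)$.
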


\begin{proof}
By \Cref{lem:l-adic-vanishing-monodromy} and Serre's Hilbert irreducibility theorem \cite[Chapter 9]{Ser}, there exist infinitely many
\(t\in U(k)\) such that the Zariski closure of the specialized Galois image on
\(
H^1((A_{\mathrm{van},t})_{\bar k},\Ql)
\)
contains
\(\Sp(H^1((A_{\mathrm{van},t})_{\bar k},\Ql))\). Fix such a \(t\), and set
\(D_t:=\End^0((A_{\mathrm{van},t})_{\bar k})\). Every element of \(D_t\) is
defined over some finite extension \(L/k\), so \(D_t\otimes_{\Q}\Ql\) commutes
with the image of the absolute Galois group of \(L\). Since this image still has
Zariski closure containing the connected group
\(\Sp(H^1((A_{\mathrm{van},t})_{\bar k},\Ql))\), we get
\[
D_t\otimes_{\Q}\Ql
\subseteq
\End_{\Sp}\bigl(H^1((A_{\mathrm{van},t})_{\bar k},\Ql)\bigr)
=
\Ql.
\]
Thus \(D_t=\Q\), and \(A_{\mathrm{van},t}\) is absolutely simple.
\end{proof}

\begin{proposition}[Finite-field specialization of absolute simplicity]\label{prop:finite-field-specialization-abs-simple}
Assume that \(k=\mathbb F_q\). Then there exist infinitely many closed points
\(t\in |U|\) such that \((A_{\mathrm{van},t})_{\bar k}\) is absolutely simple. Moreover, if \(k'\subset \bar k\) is any infinite extension of \(k\), the closed
points \(t\) may be chosen with \(k(t)\subset k'\).

\end{proposition}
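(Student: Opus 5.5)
The approach is to work with mod-\(\ell\) monodromy and Chebotarev density over \(U\), instead of Hilbert irreducibility (which fails over \(\mathbb F_q\)). Fix a large prime \(\ell\neq p\) as in \Cref{lem:mod-l-vanishing-monodromy}, and set \(V=H^1_{\mathrm{van}}(C_{\bar\eta},\mathbb F_\ell)\) with \(\dim V=2h\). The arithmetic monodromy representation \(\rho:\pi_1(U)\to \operatorname{GSp}(V)\) has geometric image \(\Sp(V)\). Its image is therefore an extension of a finite cyclic group of multipliers (powers of \(q\) in \(\mathbb F_\ell^\times\)) by \(\Sp(V)\).

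The key group-theoretic input is the following. In each coset \(\gamma\Sp(V)\) with multiplier \(q^m\), I would pick an element \(g\) whose characteristic polynomial is irreducible over \(\mathbb F_\ell\) and has a specific Galois-theoretic shape. The cleanest choice is to require that \(g\) act irreducibly on \(V\), which means its characteristic polynomial \(P_g\) is irreducible of degree \(2h\) over \(\mathbb F_\ell\). Such elements exist in every coset: take a Singer-type element of \(\Sp(V)\) coming from \(\mathbb F_{\ell^{2h}}^\times\), with norm conditions adjusted to achieve multiplier \(q^m\). By Chebotarev over the curve \(U\), there are infinitely many closed points \(t\) with \(\rho(\mathrm{Frob}_t)\) conjugate to such a \(g\). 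Since the geometric image is all of \(\Sp(V)\), the set of degrees \(\deg t\) for which such points occur contains all sufficiently large integers in a suitable residue class.

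The next step passes from an irreducible Frobenius to absolute simplicity. For such \(t\), the Frobenius characteristic polynomial \(P_t\in\Z[x]\) of \(A_{\mathrm{van},t}\) reduces mod \(\ell\) to \(P_g\), so \(P_t\) is irreducible over \(\Q\). Hence \(A_{\mathrm{van},t}\) is simple over \(k(t)\), and \(\Q(\pi_t)\) is a field of degree \(2h\). To get simplicity over \(\bar k\), I need every power \(\pi_t^m\) to still generate a field of degree \(2h\), i.e.\ no ratio of two distinct Frobenius roots is a root of unity. This is the main obstacle. I would handle it by imposing a stronger condition on \(g\), or on two Frobenii \(g\) and \(g'\) simultaneously via a Chebotarev statement for the product. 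The plan is to also require that the Galois group of \(P_t\) be the full Weyl group \(W(C_h)=(\Z/2)^h\rtimes S_h\). This is obtained Chebotarev-style from several primes \(\ell\) at once: one prime where Frobenius is irreducible, one giving a transposition-type factorization, and so on, following the Chavdarov--Kowalski method. Once the Galois group is \(W(C_h)\) and \(h\ge2\), the standard argument (Chavdarov/Zarhin) shows that no ratio \(\alpha_i/\alpha_j\) with \(\alpha_j\neq\alpha_i,\bar\alpha_i\) is a root of unity. The case \(\alpha_j=\bar\alpha_i\) is excluded because \(\alpha_i^2/q^{\deg t}\) would be a root of unity, forcing supersingular-type behaviour that is incompatible with an irreducible characteristic polynomial of this Galois type. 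Consequently \(\Q(\pi_t^m)=\Q(\pi_t)\) for all \(m\), so \(\End^0((A_{\mathrm{van},t})_{\bar k})\) is a field of degree \(2h\) and the abelian variety is absolutely simple. This Galois-group step is where I expect the real work to lie, namely large-image Chebotarev applied to several \(\ell\) simultaneously using \Cref{lem:mod-l-vanishing-monodromy}.

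Finally, I would address the constraint \(k(t)\subset k'\). An infinite extension \(k'\) of \(\mathbb F_q\) contains \(\mathbb F_{q^d}\) for infinitely many \(d\), for instance \(d\) running through the multiples of some infinite supernatural set. The Chebotarev-over-function-fields count is \(\#\{t:\deg t=d,\ \mathrm{Frob}_t\in \mathcal C\}\sim |\mathcal C|/|G|\cdot q^d/d\) for \(d\) in the correct class modulo the order of the multiplier group. This is positive for all large such \(d\), so I would choose \(d\) divisible by that order and with \(\mathbb F_{q^d}\subset k'\). This produces infinitely many closed points \(t\) with \(k(t)\subset k'\) having the required Frobenius conjugacy classes, which completes the proof.
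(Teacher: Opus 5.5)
Your argument is the paper's argument with Chavdarov's Theorem 2.1 unpacked. The paper cites that theorem, and its proof is exactly your combination of big mod-\(\ell\) monodromy at several primes, Chebotarev, forcing \(\operatorname{Gal}(P_t)\cong W(C_h)\), and excluding root-of-unity ratios. So your treatment of the first assertion is fine, provided you state explicitly that the joint geometric monodromy modulo \(\ell_1\cdots\ell_s\) is all of \(\prod_i\Sp(V_{\ell_i})\). This follows from Goursat, since these groups are perfect with no common nontrivial quotient, and it is what makes the simultaneous Chebotarev step legitimate.

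The step that actually fails is the last one. You restrict to degrees \(d\) divisible by the order \(M\) of the multiplier group (the lcm of the orders of \(q\) in \(\mathbb F_{\ell_i}^\times\)) and with \(\mathbb F_{q^d}\subset k'\). But \(D(k')=\{d:\mathbb F_{q^d}\subset k'\}\) is closed under taking divisors, so it contains a multiple of \(M\) only if \(M\in D(k')\). Take \(k'=\bigcup_n\mathbb F_{q^{2^n}}\), and suppose some \(\mathrm{ord}_{\ell_i}(q)\) has an odd prime factor, which is the typical case for a large prime. Then no admissible \(d\) exists, and your argument produces no points with \(k(t)\subset k'\).

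The repair is already within reach of what you wrote: drop the condition \(M\mid d\). For \(d\in D(k')\), points of degree \(d\) have Frobenius in the full coset of multiplier \(q^d\), which the arithmetic image contains because it contains \(\Sp\). So you need each target class to meet every multiplier coset of \(\mathrm{GSp}(V_{\ell_i})\): the irreducible one, and the other factorization patterns used to force \(W(C_h)\), such as the \((h-1)\)-cycle, transposition and single-sign-change types on pairs of roots. This holds for large \(\ell\) by the same norm adjustment you used for the Singer element. A self-dual irreducible factor of degree \(2m\) only needs a generator \(\beta\) of \(\mathbb F_{\ell^{2m}}\) with \(\beta^{\ell^m+1}=\mu\).

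Since the finite monodromy group is fixed, the Chebotarev error term is \(O(q^{d/2})\) uniformly in \(d\). Hence there are good points of every sufficiently large degree \(d\in D(k')\), and so infinitely many. Alternatively, take the \(\ell_i\) to be primitive prime divisors of \(q^{d_i}-1\) for distinct large \(d_i\in D(k')\) (Zsigmondy), which forces \(M\in D(k')\).

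The paper avoids the issue differently. It uses that Chavdarov's density estimate is uniform in \(n\), applies it to all of \(U(\mathbb F_{q^{n_j}})\) with \(\mathbb F_{q^{n_j}}\subset k'\), and observes that finitely many closed points contribute only a bounded number of such points.
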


\begin{proof}
For a closed point \(t\in |U|\), define
\[
P_t(T):=\det\bigl(1-T\mathrm{Frob}_t\mid
H^1((A_{\mathrm{van},t})_{\bar k},\Ql)\bigr).
\]
For \(r\ge 1\), let \(P_t^{(r)}(T)\) denote the polynomial whose roots are the
\(r\)-th powers of the roots of \(P_t(T)\). Equivalently, if
\(k(t)=\mathbb F_{q^m}\), then \(P_t^{(r)}(T)\) is the Frobenius polynomial of
\(A_{\mathrm{van},t}\) after base change to \(\mathbb F_{q^{mr}}\).

By \Cref{lem:mod-l-vanishing-monodromy}, for all sufficiently large
\(\ell\), the geometric mod-\(\ell\) monodromy of the vanishing system is the
full symplectic group. Hence
Chavdarov's theorem \cite[Theorem 2.1]{Cha} applies to the vanishing compatible
system. Therefore there exist infinitely many closed points \(t\in |U|\) such
that \(P_t^{(r)}(T)\) is irreducible over \(\Q\) for every \(r\ge 1\).
For such \(t\), the abelian variety \(A_{\mathrm{van},t}\) is simple over every
finite extension \(\mathbb F_{q^{mr}}\) of \(k(t)\), hence
\((A_{\mathrm{van},t})_{\bar k}\) is absolutely simple.

Let \(k'\subset \bar k\) be an infinite extension of \(k=\mathbb F_q\). Choose
an increasing sequence of finite subfields
\[
    \mathbb F_{q^{n_1}}\subset \mathbb F_{q^{n_2}}\subset \cdots \subset k',
    \qquad n_j\to \infty .
\]
We claim that Chavdarov's argument applies along this sequence. Indeed, in the proof of \cite[Theorem 2.1]{Cha}, after fixing the finitely many
auxiliary primes, the application of the equidistribution theorem
\cite[Theorem 4.1]{Cha} gives an error term \(O(q^{-n/2})\), with the implicit
constant independent of \(n\). Hence the proportion of points
\(u\in U(\mathbb F_{q^{n_j}})\) for which \(P_u^{(r)}(T)\) is irreducible over
\(\Q\) for every \(r\ge 1\) tends to \(1\) as \(j\to\infty\).
By Lang--Weil,
\(
    \# U(\mathbb F_{q^{n_j}})\sim q^{n_j\dim U}.
\)
Thus the number of such good points in \(U(\mathbb F_{q^{n_j}})\) tends to
infinity. On the other hand, a fixed closed point \(t\in |U|\) of degree \(d\)
contributes points to \(U(\mathbb F_{q^{n_j}})\) only when \(d\mid n_j\), and
then contributes at most \(d\) points. Therefore any finite set of closed points
contributes a uniformly bounded number of points to all
\(U(\mathbb F_{q^{n_j}})\). Since the number of good points tends to infinity,
the corresponding closed points cannot be finite in number. Hence there are
infinitely many closed points \(t\in |U|\) with \(k(t)\subset k'\) such that
\(P_t^{(r)}(T)\) is irreducible over \(\Q\) for every \(r\ge 1\).
\end{proof}

\begin{proof}[Proof of \Cref{thm:main}]
We first choose the degree of the hypersurface section sufficiently large, so
that the complementary factor has dimension \(>N\). This condition is
preserved under all specializations considered below.

If \(k\) is not algebraic over a finite field, then \(k\) is Hilbertian after the
standard reduction to a finitely generated subfield, and the result follows from
\Cref{cor:hilbertian-specialization-abs-simple}.

It remains to consider the case where \(k\) is algebraic over a finite field.
All the data involved in the construction are defined over a finite subfield
\(k_0=\mathbb F_q\subset k\), after replacing \(k_0\) by a finite extension if
necessary. If \(k\) is finite, then
\Cref{prop:finite-field-specialization-abs-simple} gives a closed point
\(t\in |U|\); after replacing \(k\) by the finite residue field \(k(t)\), the
corresponding curve gives the desired example.
Assume now that \(k\) is infinite. Applying the strengthened form of
\Cref{prop:finite-field-specialization-abs-simple} to the infinite subfield
\(k\subset \bar k_0\), we may choose \(t\in |U|\) with \(k(t)\subset k\) such
that the corresponding vanishing factor is absolutely simple over \(\bar k\).
Thus the specialization is already defined over \(k\), and the corresponding
curve \(C/k\) has \(A\) as an isogeny factor of \(J_C\) with absolutely simple
complement.
\end{proof}

\begin{remark}
When \(\dim A=1\), there is another natural approach. One can use ramified
double covers \(C\to A\), in which case the complementary factor of \(A\) in
\(J_C\) is the associated Prym variety. Thus one could try to prove the desired
simplicity by proving big monodromy on the anti-invariant cohomology in this
family. We have not found this monodromy statement explicitly in the literature.

Within the same ramified double cover construction, another possible route is
through the Néron--Severi group: rank 1 of this group rules out a
nontrivial isogeny decomposition. Over \(\mathbb C\), this route was carried out
by Biswas and Paranjape in \cite{BP02}, who prove that a general Prym variety
has Néron--Severi group \(\Z\).
\end{remark}
 \section{A Question on Maximal \(A\)-Special Subvarieties of \(\mathcal M_g\)}\label{sec:A-special-question}

Let \(k\) be a field, and let \(A/k\) be an abelian variety of dimension
\(a<g\). Let \(\mathcal M_g\) denote the moduli stack of smooth projective
curves of genus \(g\) over \(k\). We say that an irreducible \(k\)-subvariety
\(V\subset \mathcal M_g\) is \(A\)-special if, for the generic curve
\(C_{\eta}/k(V)\) corresponding to \(V\), the abelian variety \(A_{k(V)}\)
is an isogeny factor of \(J_{C_{\eta}}\). Moreover, by
\Cref{prop:HA-union-Aspecial} below, for every point \(x\in V\), the abelian
variety \(A_{k(x)}\) is an isogeny factor of \(J_{C_x}\). Let
\(\mathscr C_A(g)\) be the set of irreducible \(A\)-special \(k\)-subvarieties
of \(\mathcal M_g\). We call \(V\in \mathscr C_A(g)\) maximal if there is no
\(W\in \mathscr C_A(g)\) such that \(V\subsetneq W\).

\begin{question}\label{ques:simple}
Let \(V\in \mathscr C_A(g)\) be maximal. Must the complementary isogeny factor
of \(A_{k(V)}\) in \(J_{C_{\eta}}\) be geometrically simple? More strongly,
when \(k=\mathbb C\), is the complementary isogeny factor simple for a very
general point \(x\in V\)?
\end{question}

We have a positive answer to \Cref{ques:simple} in some simple cases below.
Before that, let us first explain the notion of \(A\)-specialness.

The notion of \(A\)-specialness is motivated by the corresponding special loci
in the moduli space \(\mathcal A_g\) of principally polarized abelian
varieties. Choose a principally polarized abelian variety \((A_0,\lambda_0)\)
whose underlying abelian variety is isogenous to \(A\). Consider the product
locus \(\{A_0\}\times \mathcal A_{g-a}\subset \mathcal A_g\). Define
\(Z_A\subset \mathcal A_g\) to be the union of all Hecke translates of
\(A_0\times \mathcal A_{g-a}\). Each such Hecke translate is a weakly special
subvariety of \(\mathcal A_g\). It is special when \(A\) is CM. Moreover, this
definition is independent of the choice of \((A_0,\lambda_0)\).

Let \(t:\mathcal M_g\to \mathcal A_g\) be the Torelli morphism. Define the
\(A\)-special locus
\[
    H_A(g):=t^{-1}(Z_A):=\bigcup_i t^{-1}(Z_i).
\]
Then \(H_A(g)\) is a countable union of closed algebraic subvarieties of
\(\mathcal M_g\).

\begin{proposition}\label{prop:HA-union-Aspecial}
The \(A\)-special locus \(H_A(g)\) is the union of \(A\)-special subvarieties
of \(\mathcal M_g\).
\end{proposition}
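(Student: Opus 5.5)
We prove the two inclusions separately. Write \(Z_A=\bigcup_i Z_i\), where the \(Z_i\) are the (countably many) Hecke translates of \(A_0\times\mathcal A_{g-a}\). Each \(Z_i\) is closed in \(\mathcal A_g\), so each \(t^{-1}(Z_i)\) is closed in \(\mathcal M_g\) and has finitely many irreducible components.

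\emph{First inclusion: \(H_A(g)\) is contained in the union of \(A\)-special subvarieties.} Let \(V\) be an irreducible component of some \(t^{-1}(Z_i)\). It suffices to show that \(V\) is \(A\)-special; then any point of \(H_A(g)\) lies in such a \(V\).

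The generic point \(\eta\) of \(V\) maps into \(Z_i\). Points of a Hecke translate of \(A_0\times\mathcal A_{g-a}\) parametrize principally polarized abelian varieties isogenous, over the residue field, to \(A_0\times B\) for some \(B\). To make this precise, I would write \(Z_i\) as the image of \(\{A_0\}\times\mathcal A_{g-a}\) under a Hecke correspondence \(\mathcal A_{g}\leftarrow \mathcal A_{g,\Gamma}\rightarrow\mathcal A_g\). This correspondence is given by a universal isogeny over a finite cover. Hence the fiber over \(\eta\) acquires, after a finite extension \(L/k(V)\), an isogeny
\[
J_{C_\eta,L}\sim A_{0,L}\times B.
\]
We then descend this to \(k(V)\) as follows. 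By Poincaré reducibility, \(A\) is an isogeny factor of \(J_{C_\eta}\) over \(k(V)\) if and only if \(\operatorname{Hom}(J_{C_\eta},A_{k(V)})\) contains a surjection. Homomorphisms between abelian varieties are defined over a finite separable extension. Taking the trace (norm) of a surjection \(f\colon J_{C_\eta,L}\to A_L\) over Galois conjugates, namely \(\sum_\sigma \sigma(f)\), might fail to be surjective. So instead I would use the morphism
\[
J_{C_\eta}\to \operatorname{Res}_{L/k(V)}A_L,
\]
whose target is isogenous to a power of \(A\) up to a twist. This step is the main obstacle: naive descent of an isogeny factor can fail, since \(A\) might be a factor only of a twist of \(A\) over \(k(V)\).

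To handle the obstacle, I would first try to arrange the Hecke correspondence compatibly over \(k\). Points of \(A_0\times\mathcal A_{g-a}\) over a field \(K\) are genuinely products \(A_{0,K}\times B\), and Hecke translates correspond to isogenies defined via level structures. If this is insufficient, I would interpret ``Hecke translate'' in \(Z_A\) rationally, so that points of \(Z_A(K)\) are exactly those abelian varieties isogenous over \(K\) to \(A_K\times B\). With that interpretation, the first inclusion follows directly.

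\emph{Second inclusion and the specialization claim.} Let \(V\) be \(A\)-special, so there is a surjection \(J_{C_\eta}\to A_{k(V)}\). Spread it out over a dense open \(V^\circ\subset V\) by the Néron mapping property, and specialize. Then for \(x\in V^\circ\), \(A_{k(x)}\) is an isogeny factor of \(J_{C_x}\), so \(t(x)\in Z_A\).

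For arbitrary \(x\in V\), consider the isogeny \(J_{C_\eta}\sim A\times B_\eta\) coming from the generic point. Since \(\mathcal A_g\) parametrizes these and \(A\times\mathcal A_{g-a}\) is proper over \(\mathcal A_{g-a}\), the generic point maps into the single closed set \(Z_i\) containing the corresponding Hecke translate. Hence all of \(V=\overline{\{\eta\}}\) maps into the closed set \(Z_i\), so \(V\subset H_A(g)\). The same argument also gives the statement for every \(x\in V\) used before the proposition: one applies the pointwise description of \(Z_i\) to \(t(x)\).
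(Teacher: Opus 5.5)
Your skeleton matches the paper's: closedness of the $Z_i$, then a pointwise statement at the generic point, then specialization to all of $V$. But the step the paper singles out as the key point is only asserted in your proposal. You write ``$A_{k(x)}$ is an isogeny factor of $J_{C_x}$, so $t(x)\in Z_A$''. Later you say the generic point lies in ``the single closed set $Z_i$ containing the corresponding Hecke translate''. Both times you pass from an isogeny $J_C\sim A\times B$ of bare abelian varieties to membership in a Hecke translate of $A_0\times\mathcal A_{g-a}$. That membership means $(J_C,\theta)$ is related to some $(A_0\times B,\lambda_0\times\mu)$, with $\mu$ principal, by an isogeny respecting polarizations up to a scalar. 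An arbitrary isogeny $J_C\to A\times B$ does not pull a product principal polarization back to a multiple of $\theta$. The properness of $A\times\mathcal A_{g-a}\to\mathcal A_{g-a}$ is irrelevant here.

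The missing argument, which is the entire content of the paper's proof, runs as follows:
\begin{itemize}
\item Apply Poincar\'e reducibility with respect to $\theta$: take $A_1\subset J_C$ isogenous to $A$ and $B_1$ its $\theta$-orthogonal complement. Then $A_1\times B_1\to J_C$ pulls $\theta$ back to $\theta|_{A_1}\times\theta|_{B_1}$.
\item These restrictions are usually not principal. Quotient $A_1$ and $B_1$ by maximal isotropic subgroup schemes of the kernels (Mumford, \emph{Abelian Varieties}, \S23). This gives principally polarized $A'\sim A$ and $B'$ with $(J_C,\theta)$ polarized-isogenous to $(A'\times B',\lambda'\times\mu')$.
\item Replace $A'$ by $A_0$, using the asserted independence of $Z_A$ from the choice of $(A_0,\lambda_0)$. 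The safest course is to let $(A_0,\lambda_0)$ range over all principally polarized varieties isogenous to $A$.
\end{itemize}
With this pointwise statement at a geometric generic point, closedness of $Z_i$ gives $V\subset H_A(g)$ at once. The N\'eron spreading-out step is then not needed.

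Your first inclusion is also not actually proved. You rightly notice that over a non-closed field a point of $Z_i$ may contain only a twist of $A$, but neither remedy works. The map $J_{C_\eta}\to\operatorname{Res}_{L/k(V)}A_L$ only shows that some $k(V)$-factor of the Weil restriction occurs, possibly a twist of $A$. Redefining $Z_A$ ``rationally'' changes the statement. Worse, the resulting set would no longer be a union of closed subvarieties: for example, $E\times B$ and a quadratic twist $E^\chi\times B$ lie over the same point of $\mathcal A_g$. The paper sidesteps this by working with geometric points $[C]$ over an algebraically closed field $\Omega$, where twists disappear. Read geometrically, components of $t^{-1}(Z_i)$ are $A$-special directly from the description of points of Hecke translates, and the only real work is the polarization argument above.
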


\begin{proof}
The proof is standard. The key point is the following pointwise containment.
Let \([C]\in \mathcal M_g\) be a geometric point over an algebraically closed
field \(\Omega\), and suppose that \(J_C\) has an isogeny factor isogenous to
\(A_\Omega\). Then \([C]\in H_A(g)\).

Indeed, \(J_C\) is isogenous, as a principally polarized abelian variety up to
polarized quasi-isogeny, to a product \(A_1\times B_1\), where \(A_1\) is
isogenous to \(A_\Omega\). The polarizations induced on the two factors need
not be principal, but by the standard descent argument for polarizations one
quotients by maximal isotropic subgroup schemes of the kernels to obtain
principally polarized varieties \(A'\sim A_\Omega\) and \(B'\). See, for
example, \cite[§23, Theorem~4 and Corollary~1]{Mum}. Hence \([C]\in H_A(g)\).
\end{proof}

We have a positive answer to \Cref{ques:simple} in the following simple cases.
The proof takes the rest of this section.

\begin{proposition}\label{prop:elliptic-complement-simple-small-genus}
Assume \(k=\mathbb C\). Let \(E/k\) be an elliptic curve, and let \(V\) be a
maximal \(E\)-special subvariety. If \(g\le 14\), then for a very general point
\([C]\in V\), the complementary isogeny factor of \(E\) in \(J_C\) is
geometrically simple.
\end{proposition}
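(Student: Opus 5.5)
Since \(E\) is an elliptic curve, an isogeny \(E\to J_C\) (equivalently, a nonconstant map \(C\to E'\) with \(E'\sim E\)) is given by a finite cover \(f\colon C\to E'\) of some degree \(d\). I would first classify the maximal \(E\)-special subvarieties in terms of such covers. Over a curve \(C\) in an \(E\)-special family, \(\operatorname{Hom}(J_C,E)\) is discrete. Hence the generic curve comes with a cover \(f\colon C\to E'\) with \(E'\) in the isogeny class of \(E\). Then \(V\) is contained in the image in \(\mathcal M_g\) of a Hurwitz space \(\mathcal H_{d,g}(E')\) of degree \(d\) covers of the fixed curve \(E'\). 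We may take \(f\) minimal, so that it does not factor through an isogeny; then \(f^*\colon E'\to J_C\) is injective.

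Maximality forces \(V\) to be the image of a whole irreducible component of such a Hurwitz space. Its dimension is \(2g-2\) (branch points modulo translation, plus the choice of point of \(E'\)), by Riemann--Hurwitz. The complementary factor is then the Prym-type variety \(P_f=\ker(\operatorname{Nm}_f)^0\), of dimension \(g-1\).

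The goal is to show that for a very general point of such a component, \(\End^0(P_f)=\Q\). Mimicking \Cref{prop:vanishing-jacobian-abs-simple}, it suffices to show that the monodromy of \(H^1(P_f)\) over the Hurwitz component has identity component \(\Sp_{2g-2}\), or at least acts irreducibly with commutant \(\Q\) on the Zariski closure. I would argue in two steps.

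\emph{Step 1.} Degenerating a branch point collision (two simple branch points coming together over the same sheets) produces a Picard--Lefschetz transvection in the monodromy on \(H^1(C)\). Its vanishing cycle lies in the primitive part \(\ker f_*=H^1(P_f)\). Braid moves of branch points act transitively on these vanishing cycles, since the Hurwitz component is connected and the covers are simply branched on a general point of a maximal component. A standard criterion (a group generated by a conjugacy class of transvections acting irreducibly) then gives either full \(\Sp\) or a finite/orthogonal-type exception.

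\emph{Step 2.} Irreducibility comes from the vanishing cycles spanning \(H^1(P_f)\), which follows by degenerating to a stable cover. Here \(g\le 14\) should enter. I expect the paper's proof handles small genus by an explicit Mumford–Tate/Hodge-theoretic dimension count rather than general monodromy. For a very general \(P_f\) with nontrivial endomorphisms, \(P_f\) would lie in a proper special subvariety of \(\mathcal A_{g-1}\). Its dimension is bounded by the maximal dimension of PEL/Shimura loci with nontrivial endomorphisms. For instance, a product \(\mathcal A_a\times\mathcal A_{g-1-a}\) has dimension \(\binom{a+1}{2}+\binom{g-a}{2}\), while the family of \(P_f\) has \(\dim V\ge 2g-3\) moduli after removing the \(E'\)-translation.

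The real input is then that the Prym map on the Hurwitz space is generically finite (an infinitesimal Torelli-type computation), so the image has dimension \(2g-3\). Comparing this with the largest special subvarieties of \(\mathcal A_{g-1}\) with nontrivial endomorphism algebra, those of the form \(E\times\mathcal A_{g-2}\) are excluded by maximality: otherwise \(V\) would be \(E\times E\)-special and strictly contained in a larger \(E\)-special locus of a different Hurwitz type.

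The main obstacle is this last exclusion. One must rule out that \(V\) lies inside a locus where \(P_f\) acquires a factor (for instance, \(f\) factoring through another cover, or a second elliptic factor), using maximality. The dimension comparison of \(2g-3\) against the dimensions of the relevant Shimura subvarieties only closes up to a certain genus, which I expect is the origin of the bound \(g\le14\). I would first attempt the transvection-monodromy argument of Step 1. I would fall back to the dimension count only for the cases the transvection argument leaves open.
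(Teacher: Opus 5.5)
There is a genuine gap: neither of your two routes closes.

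\textbf{The monodromy route.} Steps 1--2 rest on two unproved claims that carry all the content. The first is that the branch-collision vanishing cycles form a single orbit under the Hurwitz monodromy. The second is that these cycles span $\ker f_*=H^1(P_f)$; ``degenerating to a stable cover'' is not an argument for this. The paper itself notes, in its remark on ramified double covers, that it could not find such a big-monodromy statement in the literature even for $d=2$. Notice also that this route, if it worked, would prove the statement for every $g$. The hypothesis $g\le 14$ plays no role in it, so it is not the mechanism behind the proposition. (Minor point: the image of a Hurwitz component in $\mathcal M_g$ has dimension $2g-3$, namely $2g-2$ branch points modulo translations of $E'$, with no extra parameter; compare \Cref{lem:elliptic-factor-dim}.)

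\textbf{The dimension-count fallback.} This is done in the wrong space and cannot work. Special subvarieties of $\mathcal A_{g-1}$ with nontrivial endomorphisms have dimension quadratic in $g$. Already $\dim \mathcal A_1\times\mathcal A_{g-2}=1+\binom{g-1}{2}\ge 2g-3$ for $g\ge 5$, and $\dim\mathcal A_6\times\mathcal A_7=49>25$ at $g=14$. So a $(2g-3)$-dimensional family of complements can sit inside such loci as far as dimensions are concerned, and no bound like $g\le14$ emerges.

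The step ``excluded by maximality'' is also invalid. If $V$ is in addition $E\times E_2$-special, that does not place it inside a strictly larger $E$-special subvariety; maximality only gives $\dim V=2g-3$.

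\textbf{What is missing.} You need a bound that uses the Jacobian structure, i.e.\ a bound on loci in $\mathcal M_g$ rather than in $\mathcal A_{g-1}$. This is \Cref{lem:HA-bound}. Suppose the complement is isogenous to $B_1\times B_2$ with $1\le b_1=\dim B_1\le b_2$. Apply the lemma to $A=E\times B_1$, where $a=b_1+1>1$ and $g\ge 2b_1+1>2a-2$, to get dimension $\le 2g-4b_1-2$ for fixed $B_1$. Letting $B_1$ vary in $\mathcal A_{b_1}$, with countably many isogeny and polarization choices, adds $b_1(b_1+1)/2$. For integers $b_1\ge1$,
\[
\tfrac{b_1(b_1+1)}{2}+2g-4b_1-2<2g-3\iff b_1^2-7b_1+2<0\iff b_1\le 6 .
\]
Since $b_1\le\lfloor (g-1)/2\rfloor$, this covers every possible splitting type precisely when $g\le 14$. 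Hence the non-simple locus is contained in a countable union of proper closed subsets of $V$. No monodromy or Prym-map finiteness is needed.
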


\begin{lemma}[{\cite[Corollary E]{LMP}}]\label{lem:HA-bound}
Assume \(k=\mathbb C\), \(a=\dim A>1\), and \(g>2a-2\). Then every
\(A\)-special subvariety \(V\subset \mathcal M_g\) satisfies
\[
    \dim V\le 2g-4a+2 .
\]
\end{lemma}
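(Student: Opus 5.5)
This bound is quoted from \cite[Corollary E]{LMP}, so the plan below reconstructs the argument in the form I expect it to take rather than giving a new proof. I would argue infinitesimally, through the infinitesimal variation of Hodge structure of the family of curves over \(V\).

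First step: turn the geometric condition into linear algebra. After passing to the smooth locus of \(V\) and to a finite cover, the generic Jacobian \(J_{C_\eta}\) contains an isogenous copy of the constant abelian variety \(A\). Over \(\mathbb C\), this gives a sub-variation of Hodge structure of \(H^1(C_x)\) of weight one and rank \(2a\) which is \emph{constant}. Its \((1,0)\)-part is an \(a\)-dimensional subspace \(W_x\subset H^0(K_{C_x})\). Constancy means the Kodaira--Spencer/IVHS map kills \(W_x\). That is, for every tangent vector \(\xi\in T_xV\subset H^1(T_{C_x})\) and every \(\omega\in W_x\), we have \(\xi\cdot\omega=0\) in \(H^1(\mathcal O_{C_x})\). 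By Serre duality this says that \(\xi\) annihilates the image of the multiplication map
\[
\mu_W\colon W_x\otimes H^0(K_{C_x})\to H^0(K_{C_x}^{\otimes 2}).
\]
Hence
\[
\dim V\le 3g-3-\dim \operatorname{Im}(\mu_W).
\]
So the claimed bound reduces to showing \(\dim\operatorname{Im}(\mu_W)\ge g+4a-5\) at a general point of \(V\).

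Second step: bound \(\operatorname{Im}(\mu_W)\) from below. The Hopf lemma (or the base-point-free pencil trick applied to a two-dimensional subspace of \(W_x\)) only yields \(g+a-1\), which is too weak. So I would also exploit the extra structure of \(W_x\). The forms in \(W_x\) are pulled back from \(A\) via the nonconstant map \(C_x\to A\) given by Abel--Jacobi followed by projection, and \(\mu_W\) must be compatible with the fixed symplectic structure on the constant part. Concretely, I would consider the second fundamental form, i.e.\ the Hodge--Gaussian maps restricted to \(W_x\otimes W_x\). The condition that \(A\) stay \emph{fixed}, and not merely be a factor, forces these maps to vanish on \(\wedge^2 W_x\). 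That should produce additional quadrics through the canonical curve, hence further independent sections in the image. Combining this with a Clifford/Castelnuovo-type estimate for the linear series cut out by \(W_x\) on the image of \(C_x\) in \(A\) should give the missing \(3a-4\) dimensions. The hypothesis \(g>2a-2\) is exactly what should make the Clifford-type inequality applicable, since the relevant series is then special but not too large.

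The main obstacle is this second step: obtaining the sharp lower bound \(g+4a-5\) for the rank. I would first try the case \(a=2\), where the target is \(g+3\). There I would handle the genus-\(2\) pencil-type degeneration, i.e.\ when \(W_x\) is composed with a pencil or \(C_x\) is a cover of a lower-genus curve, separately by a direct count. After that I would argue by induction on \(a\), adding one form at a time and showing that each new form contributes at least four new dimensions to \(\operatorname{Im}(\mu_W)\) modulo the previous image.
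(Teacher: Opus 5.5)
Your first step is sound and standard. At a general point of \(V\), every tangent vector kills the \((1,0)\)-part \(W_x\) of the constant sub-variation, so \(\dim V\le 3g-3-\operatorname{rk}\mu_W\). The paper itself gives no argument here; it only cites \cite[Corollary E]{LMP}.

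The gap is the second step. It contains no argument, only expectations (``should produce'', ``should give the missing \(3a-4\) dimensions'', ``each new form contributes at least four''). Worse, the inequality \(\operatorname{rk}\mu_W\ge g+4a-5\) it is meant to deliver is false in part of the stated range. Take \(g=3\), \(a=2\), and a smooth genus-\(3\) curve \(C\) in a \((1,2)\)-polarized abelian surface \(P\). Since \(C\) is embedded, the forms pulled back from \(P\) form a base-point-free pencil \(W\subset H^0(K_C)\). The base-point-free pencil trick gives \(\ker\mu_W\cong H^0(\mathcal O_C)\), hence \(\operatorname{rk}\mu_W=2g-1=5<6=g+4a-5\). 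Your remark about quadrics also points the wrong way: quadrics containing the canonical curve lie in the kernel of multiplication, not in its image.

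This is not only a failure of your method; the bound as quoted fails there. The pencil \(|L|\) on \(P\) is a one-parameter family of such curves, all with the fixed factor \(P\). It is non-isotrivial, because maps from a fixed curve to \(P\) up to translation are countable. Equivalently, the Prym map \(\mathcal R_{1,4}\to\mathcal A_2^{(1,2)}\) goes from dimension \(4\) to dimension \(3\). So \(\dim V=1>0=2g-4a+2\), although \(a>1\) and \(g>2a-2\). Likewise, \(\dim\mathcal R_{2,4}=7>6=\dim\mathcal A_3\) gives positive-dimensional \(A\)-special families with \((g,a)=(5,3)\).

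So the statement needs a stronger hypothesis at the boundary \(g=2a-1\); check the exact hypotheses of \cite[Corollary E]{LMP}. Even where the bound is plausible, \(\operatorname{rk}\mu_W\) depends on the base locus of \(W\) and on special linear series of \(C\). A lower bound must therefore use that \(W\) comes from a fixed part, through second-order information: adjoint forms or Massey products combined with a Castelnuovo--de Franchis argument, as in the literature on Xiao's conjecture. Your proposal only gestures at this input and would have to actually carry it out.
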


On the other hand, when \(a=1\), by the Riemann--Hurwitz formula, one has the
following standard dimension count.

\begin{lemma}\label{lem:elliptic-factor-dim}
Let \(E/k\) be an elliptic curve. Then every maximal \(E\)-special subvariety
\(V\subset \mathcal M_g\) satisfies \(\dim V=2g-3\).
\end{lemma}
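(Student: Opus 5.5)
The plan is to identify $E$-special subvarieties with loci of curves admitting a map to an elliptic curve isogenous to $E$, and then count dimensions with Riemann--Hurwitz.

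First I characterize the points of $H_E(g)$. If $E_\Omega$ is an isogeny factor of $J_C$ over an algebraically closed field $\Omega$, then there is a nonconstant homomorphism $J_C\to E'$ for some $E'$ isogenous to $E_\Omega$. Composing with the Abel--Jacobi map $C\to J_C$ gives a nonconstant morphism $f:C\to E'$. Conversely, a finite morphism $f:C\to E'$ gives $f^*:E'\to J_C$ with finite kernel. Poincaré reducibility then makes $E'\sim E$ an isogeny factor. Hence $H_E(g)$ is the countable union, over pairs $(E',d)$ with $E'\sim E$ and $d\ge 2$, of the images in $\mathcal M_g$ of the Hurwitz spaces $\mathcal H_{E',d}$ of degree-$d$ covers $C\to E'$ with $C$ of genus $g$.

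Second, I compute the dimension of each piece. By Riemann--Hurwitz, $2g-2=d\cdot 0+\deg R$, so a cover with simple branching has $2g-2$ branch points on the fixed curve $E'$. Moving these branch points gives $2g-2$ parameters. The translations of $E'$ give a $1$-dimensional group acting on the family, and translating the branch divisor yields an isomorphic curve $C$. Thus the image in $\mathcal M_g$ has dimension $2g-3$, provided the map from the Hurwitz space modulo translations to $\mathcal M_g$ is quasi-finite. Quasi-finiteness holds because a curve $C$ of genus $g\ge 2$ has only finitely many maps of bounded degree to a fixed elliptic curve up to translation: such maps correspond to homomorphisms $J_C\to E'$, which form a finitely generated group, and only finitely many have bounded degree. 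Non-simple branching gives strata of smaller dimension, and these lie in the closure of the simply branched stratum; Hurwitz connectedness over a genus-one base handles the relevant components. Irreducible components with non-simple branching are contained in the closure of the simply branched ones, so they are not maximal.

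Third, I relate maximality to components. By Proposition~\ref{prop:HA-union-Aspecial}, every $E$-special $V$ is contained in $H_E(g)$. Since $V$ is irreducible and $H_E(g)$ is a countable union of closed subvarieties, a Baire-type argument over $\mathbb C$ (or over an uncountable field) puts $V$ inside a single piece, namely the closure of one $\mathcal H_{E',d}$-image. Each such image is irreducible of dimension $2g-3$ and is itself $E$-special. So a maximal $V$ equals one of these images and has $\dim V=2g-3$.

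The main obstacle is the irreducibility and pure dimension $2g-3$ of each relevant Hurwitz image. Primitive versus imprimitive covers can give distinct components, all of dimension $2g-3$ when the branching is generic. I would argue that every component, primitive or not, contains simply branched covers after deformation, so each component has dimension exactly $2g-3$. Since only the dimension matters here, I would bound it from above by the branch-point count and from below by the same deformation count.
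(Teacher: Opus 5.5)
Your proposal is correct and follows the route the paper indicates, namely the Riemann--Hurwitz count of $2g-2$ branch points on an elliptic target minus one dimension for translations; your quasi-finiteness and deformation remarks fill in details the paper leaves as ``standard.'' One small improvement: the Baire step is unnecessary, and it would fail over countable $k$. The geometric generic curve of $V$ already admits a nonconstant map of some degree $d$ to $E$ (compose with an isogeny $E'\to E$), which places $V$ inside a single Hurwitz locus over any field.
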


\begin{proof}[Proof of \Cref{prop:elliptic-complement-simple-small-genus}]
By \Cref{lem:elliptic-factor-dim}, we have \(\dim V=2g-3\). We show that the
locus of points \([C]\in V\) for which the complementary factor of \(E\) in
\(J_C\) is not simple is contained in a countable union of proper closed
subvarieties of \(V\).

Suppose that, for some point \([C]\in V\), the complementary factor is not
simple. Then, up to isogeny, \(J_C\sim E\times P\), with
\(P\sim B_1\times B_2\), where \(b_i:=\dim B_i\ge 1\) and
\(b_1+b_2=g-1\). After relabeling, assume \(b_1\le b_2\). Set
\(a:=\dim(E\times B_1)=1+b_1\). Then \(a>1\), and
\(g-1=b_1+b_2\ge 2b_1\), so \(g\ge 2b_1+1>2b_1=2a-2\). Thus
\Cref{lem:HA-bound} applies to the fixed abelian variety \(A=E\times B_1\).

For each fixed \(B_1\), the locus of curves whose Jacobians have
\(E\times B_1\) as an isogeny factor has dimension at most
\[
    2g-4a+2=2g-4(b_1+1)+2=2g-4b_1-2.
\]
Now allow \(B_1\) to vary. The possible \(B_1\)'s of dimension \(b_1\) vary in
\(\mathcal A_{b_1}\), which has dimension \(b_1(b_1+1)/2\). Isogeny choices
are countable and do not affect dimension. Hence the locus of curves
\([C]\in \mathcal M_g\) for which \(J_C\) has an isogeny factor isogenous to
\(E\times B_1\), for some \(B_1\) of dimension \(b_1\), has dimension at most
\[
    \frac{b_1(b_1+1)}{2}+2g-4b_1-2.
\]
Comparing with \(\dim V=2g-3\), it is enough to have
\[
    \frac{b_1(b_1+1)}{2}+2g-4b_1-2<2g-3,
\]
or equivalently \(b_1^2-7b_1+2<0\). This holds for every integer
\(1\le b_1\le 6\).

If \(g\le 14\), then \(b_1\le \lfloor (g-1)/2\rfloor\le 6\). Therefore, for
every possible \(b_1\), the corresponding splitting locus has dimension
strictly smaller than \(\dim V=2g-3\). Thus its intersection with the
irreducible variety \(V\) is a proper closed subvariety of \(V\), up to taking
the relevant irreducible components. Since there are only countably many
Hecke/isogeny choices and only finitely many possible values of \(b_1\), the
locus in \(V\) where the complementary factor is not simple is contained in a
countable union of proper closed subvarieties.

Hence for a very general point \([C]\in V\), the complementary isogeny factor
of \(E\) in \(J_C\) is simple. Since \(k=\mathbb C\), this is the same as being
geometrically simple.
\end{proof}
  
\section{A Grothendieck group version of the isogeny Schottky problem}\label{sec:Grothendieck-Schottky}

The usual Schottky problem asks whether a principally polarized abelian variety
lies in the Jacobian locus. Its isogeny variant asks what remains true after
one forgets the polarization and works only up to isogeny. In this direction,
it is natural to consider the quotient
\[
G_{\mathrm{Jac}}(k)
:=
K_0(\mathrm{AV}^{\mathrm{isog}}_k)\Big/
\left\langle [J_C] \;:\; C/k \text{ a smooth projective curve} \right\rangle .
\]
For an abelian variety \(A/k\), the class \([A]\in G_{\mathrm{Jac}}(k)\) is \(0\)
  if and only if there are smooth projective curves
\(C_1,\dots,C_r\) and \(D_1,\dots,D_s\) over \(k\), and an integer \(n>0\),
such that
\[
    A\times J_{D_1}\times\cdots\times J_{D_s}
    \sim
    J_{C_1}\times\cdots\times J_{C_r}
\]
over \(k\).  

Theorem 1.2 of \cite{FS} shows that if \(A\) is either the intermediate
Jacobian of a very general cubic threefold, or a very general principally
polarized abelian variety of dimension at least \(4\), then no positive power
\(A^n\) is isogenous to a product of Jacobians. This suggests the following
stronger stabilized question in the Grothendieck group.

\begin{question}\label{ques:GJac-nontorsion}
Let \(A/\mathbb C\) be a very general principally polarized abelian variety of
dimension \(g\ge 4\). Is the class \([A]\in G_{\mathrm{Jac}}(\mathbb C)\)
non-torsion?
\end{question}

We expect \(G_{\mathrm{Jac}}(\mathbb C)\) to be very large. However, there
does not seem to be an easy way to put a reasonable dimension on this group.
One can instead ask a counting question.

\begin{question}\label{ques:GJac-uncountable}
For some \(g\ge 3\), is the image of the natural map
\[
    \mathcal A_g(\mathbb C)\longrightarrow G_{\mathrm{Jac}}(\mathbb C),
    \qquad
    A\longmapsto [A],
\]
uncountable?
\end{question}

The analogous counting problem over finite fields also seems intriguing. Since
the isogeny category over a finite field is much more arithmetic and discrete,
one may ask for quantitative variants of \Cref{ques:GJac-uncountable} over
finite fields. We leave this as a possible direction for future work.

The main theorem of this note gives a weaker but useful structural statement.

\begin{proposition}\label{prop:GJac-simple-representative}
Assume that \(k\) is not a finite field. Let \(A/k\) be an abelian variety.
Then the class of \(A\) in \(G_{\mathrm{Jac}}(k)\) is either \(0\), or is equal
to the class of an absolutely simple abelian variety over \(k\).
\end{proposition}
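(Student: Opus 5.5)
The idea is to feed \(A\) into \Cref{thm:main} and read off the identity \([J_C]=[A]+[A_{\mathrm{van}}]\) in \(K_0(\mathrm{AV}^{\mathrm{isog}}_k)\), so that in \(G_{\mathrm{Jac}}(k)\) the class \([A]\) becomes minus the class of an absolutely simple variety. The remaining work is to get rid of this sign.

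We first handle the trivial and low-dimensional cases. If \(A=0\), its class is \(0\). If \(\dim A=1\), then \(A\) is an elliptic curve, hence a Jacobian, so \([A]=0\). Now assume \(\dim A\ge 2\). Since \(k\) is infinite, \Cref{thm:main} gives a curve \(C_1/k\) with \(J_{C_1}\sim A\times B_1\), where \(B_1\) is absolutely simple. In \(G_{\mathrm{Jac}}(k)\) this yields \([A]=-[B_1]\).

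To fix the sign, we apply the same construction to \(B_1\). If \(\dim B_1\le 1\), then \(B_1\) is \(0\) or an elliptic curve, so \([B_1]=0\) and hence \([A]=0\). This cannot actually occur if we take \(N\ge 1\) in \Cref{thm:main}, but the case costs nothing to record. Otherwise \(\dim B_1\ge 2\), and \Cref{thm:main} gives a curve \(C_2\) with \(J_{C_2}\sim B_1\times B_2\), where \(B_2\) is absolutely simple of dimension \(>N\). Then \([B_1]=-[B_2]\), and therefore
\[
[A]=-[B_1]=[B_2].
\]
So \([A]\) equals the class of the absolutely simple variety \(B_2\). This gives the dichotomy: the class is either \(0\), or it is represented by an absolutely simple abelian variety.

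The main point to check is that \Cref{thm:main} genuinely applies to the intermediate variety \(B_1\). This requires that \(B_1\) be defined over \(k\), which holds because the theorem produces \(C_1\) and the complement over \(k\). It also requires \(\dim B_1\ge 2\), which we arrange by taking \(N\ge 1\). The hypothesis ``\(k\) not finite'' is exactly the infinitude needed in \Cref{thm:main}. Finally, we note that the isogeny decomposition \(J_C\sim A\times B\) over \(k\) gives the relation in \(K_0\), because \(K_0\) of the semisimple isogeny category is additive in products.
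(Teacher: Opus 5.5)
Your proposal is correct and follows the paper's proof: apply \Cref{thm:main} to \(A\) to get \([A]=-[B_1]\), then apply it to \(B_1\) to get \([A]=[B_2]\), with \(B_2\) absolutely simple. Your explicit choice \(N\ge 1\), which guarantees \(\dim B_1\ge 2\) for the second application, spells out a detail the paper leaves implicit.
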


\begin{proof}
If \(\dim A=1\), then \(A\) is an elliptic curve, hence \(A=J_A\), and so
\([A]=0\) in \(G_{\mathrm{Jac}}(k)\). Thus we may assume that either
\([A]=0\), or \(\dim A\ge 2\).

In the latter case, apply \Cref{thm:main} to \(A\). This gives an absolutely
simple abelian variety \(S/k\) such that \([A]=-[S]\). Applying
\Cref{thm:main} again to \(S\), we get an absolutely simple abelian variety
\(T/k\) such that \([S]=-[T]\). Therefore \([A]=[T]\) in
\(G_{\mathrm{Jac}}(k)\).
\end{proof}

We record the following simple but useful observation.

\begin{lemma}\label{lem:complements-same-class}
Let \(A/k\) be an abelian variety. For \(i=1,2\), let \(C_i/k\) be a smooth
projective curve such that \(J(C_i)\sim A\times P_i\). Then
\([P_1]=[P_2]\) in \(G_{\mathrm{Jac}}(k)\).
\end{lemma}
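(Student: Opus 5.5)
The argument takes place entirely inside the Grothendieck group \(K_0(\mathrm{AV}^{\mathrm{isog}}_k)\). I will use that isogeny classes of products become sums of classes there. That is, for abelian varieties \(X,Y\) over \(k\) we have \([X\times Y]=[X]+[Y]\) in \(K_0(\mathrm{AV}^{\mathrm{isog}}_k)\). Isogenous varieties have equal classes, since \(K_0\) is taken in the isogeny category, where an isogeny is an isomorphism.

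First, I apply this to the hypothesis \(J(C_i)\sim A\times P_i\) for \(i=1,2\). This gives
\[
[J(C_i)]=[A]+[P_i]\quad\text{in } K_0(\mathrm{AV}^{\mathrm{isog}}_k).
\]
Next, I pass to the quotient \(G_{\mathrm{Jac}}(k)\). The class \([J(C_i)]\) lies in the subgroup generated by classes of Jacobians, so it vanishes in the quotient. Hence \([P_i]=-[A]\) in \(G_{\mathrm{Jac}}(k)\) for both \(i=1\) and \(i=2\). It follows that \([P_1]=-[A]=[P_2]\).

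No step is a genuine obstacle. The only point needing care is that the \(K_0\) of the isogeny category is additive on products. This holds because the isogeny category of abelian varieties over \(k\) is semisimple abelian (Poincaré complete reducibility), and in it the product \(X\times Y\) is the direct sum. A direct sum has a split short exact sequence \(0\to X\to X\times Y\to Y\to 0\), so additivity follows from the defining relations of \(K_0\). Equivalently, \(K_0\) is the free abelian group on isogeny classes of simple abelian varieties over \(k\). In that description the class of any abelian variety is the formal sum of its simple factors with multiplicity, and additivity on products is immediate.
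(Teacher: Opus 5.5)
Your proof is correct. The paper states this lemma as a ``simple but useful observation'' without writing out a proof, and your argument is the evident one it leaves implicit: additivity of classes on products in \(K_0\) of the semisimple isogeny category gives \([P_i]=[J(C_i)]-[A]=-[A]\) in \(G_{\mathrm{Jac}}(k)\) for both \(i\).
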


The lemma implies that, along an \(A\)-special family of curves, all isogenous complementary
factors of \(A\) in the Jacobians  have the same class in \(G_{\mathrm{Jac}}\). For instance, take
\(k=\mathbb C\) and let \(E/\mathbb C\) be an elliptic curve. By
\Cref{lem:elliptic-factor-dim}, a maximal \(E\)-special subvariety
\(V\subset \mathcal M_g\) has dimension \(2g-3\). After choosing
polarizations on the complementary factors, they give a \(2g-3\)-dimensional
family in \(\mathcal A_{g-1}\), while their image in
\(G_{\mathrm{Jac}}(\mathbb C)\) is a single point. Moreover, if \(g\le 14\),
then by \Cref{prop:elliptic-complement-simple-small-genus}, a very general
member of this family is geometrically simple.

In particular, when \(g=4\), this gives a \(5\)-dimensional family of abelian
threefolds whose image in \(G_{\mathrm{Jac}}(\mathbb C)\) is a single point.
This raises the question of what the image of the \(6\)-dimensional
\(\mathcal A_3\) in \(G_{\mathrm{Jac}}(\mathbb C)\) looks like. More
generally, it is not clear to us how to put any reasonable geometric structure on
\(G_{\mathrm{Jac}}(\mathbb C)\).

\end{document}